\def\DJ{\leavevmode\setbox0=\hbox{D}\kern0pt\rlap
 {\kern.04em\raise.188\ht0\hbox{-}}D}
\def\dj{\leavevmode
 \setbox0=\hbox{d}\kern0pt\rlap{\kern.215em\raise.46\ht0\hbox{-}}d}
\theoremstyle{plain}
 \newtheorem{thm}{Theorem}
 \newtheorem{lem}{Lemma}[section]
\theoremstyle{definition}
 \newtheorem{rem}{Remark}[section]
\numberwithin{equation}{section}
\numberwithin{equation}{section}
\title[Boyd's conjecture]{BOYD'S CONJECTURE}
\subjclass[2010]{Primary 11R06; Secondary 11R09, 12D10.}
\keywords{limacon, cardioid, Mahler measure, Peron numbers, Pisot numbers, Salem numbers, dilogarithms}
\author[D. Stankov]{\bfseries Dragan Stankov}
\address{
Katedra Matematike RGF-a  \\ 
Universitet u Beogradu   \\ 
11000 Beograd, \DJ u\v sina 7\\
Serbia}
\email{dstankov@rgf.bg.ac.rs, draganstankov@yahoo.com}
\begin{document}

\setcounter{page}{1}

\begin{abstract}
\normalsize 

We determine the limit of the rate $\frac{\nu_{n,a}}{n}$ between the number $\nu_{n,a}$ of roots of the trinomial $x^n-ax-1$, $a\in (0,2]$, which are greater than 1 in modulus, and degree $n$. The analogue of Boyd's Conjecture (C) for Perron numbers is a consequence of the limit, under the assumption that the conjecture of Lind-Boyd is valid. The product of these $\nu_{n,a}$ roots has also a limit when $n\to\infty$. The explicit expression of the limit by an integral is presented. The computing of the rate and the product for $n=100,150$ as well as of its limits is presented.

\end{abstract}

\maketitle

\section{Introduction}

Let $\alpha$ be an algebraic integer of degree $n$, whose conjugates are $\alpha_1 = \alpha,
\alpha_2,\ldots,\alpha_n$ and
$p=b_0x^n +b_1x^{n-1} +\cdots+b_{n-1}x +b_n$, with $b_0=1$,
its minimal polynomial. A Perron number, which was defined by Lind \cite{Lin1}, is
a real algebraic integer $\alpha$ of degree $n \geq 2$ such that $\alpha > |\alpha_i|$, $i=2,\ldots n$. Any Pisot number or Salem number is a Perron number.

Lind \cite{Lin1} conjectured that the smallest Perron number of degree $n \geq 2$ should
have minimal polynomial $x^n-x-1$. Boyd \cite{Boy1} has computed all smallest
Perron numbers of degree $n \leq 12$, and found out that Lind's conjecture is true if
$n = 2,3,4,6,7,8,10$, but false if $n > 3$ and $n\equiv 3$ or $n\equiv 5$ (mod 6). So in \cite{Boy1}, we have

Conjecture (Lind-Boyd). The smallest Perron number $\alpha$ of degree $n > 2$ has minimal
polynomial

$x^n-x-1$ if $n \not \equiv 3,5$ (mod 6),

$(x^{n+2}-x^4-1)/(x^2-x+1)$ if n $\equiv 3$ (mod 6),

$(x^{n+2}-x^2-1)/(x^2-x+1)$ if n $\equiv 5$ (mod 6).

Wu \cite{Wu} gave all Perron numbers of degree $13 \le n \le 24$ with $α\alpha \leq (2 +
1/n)^{1/n}$ and their minimal polynomials, and verified that all the smallest Perron
numbers of degree $13 \le n \le 24$ satisfy the conjecture of Lind-Boyd.

We denote, as usual, by
\[\overline{|\alpha|}=\max_{1\le i\le n}|\alpha_i|\]
the \textit{house} of $\alpha$. We define $m(n)$ to be the minimum of the houses of the algebraic integers $\alpha$ of degree $n$ which are not a root of unity. If $\nu_{n,a}$ is the number of roots $\alpha_i$, satisfying
$|\alpha_i|>1$, then Boyd \cite{Boy1} noticed in his Conjecture (C) that $\nu_{n,a} \sim \frac{2}{3} n$. First of all we shall formulate the analogue of Conjecture (C) for Perron numbers:

Conjecture (CP). If $\nu_{n,a}$ is the number of conjugates $\alpha_i$ of the smallest Perron number of degree $n > 2$, satisfying
$|\alpha_i|>1$, then $\nu_{n,a} \sim \frac{2}{3} n$.

The aim of this paper is to show that if the conjecture of Lind-Boyd is true then Conjecture (CP) should also be true. Actually we shall prove

\begin{thm}\label{cha:nuApprox}
The rate $\frac{\nu_{n,a}}{n}$ between the number $\nu_{n,a}$ of roots of the trinomial $x^n-ax-1$, $a\in (0,2]$, which are greater than 1 in modulus, and degree $n$, tends to $\frac{1}{\pi}\arccos(-\frac{a}{2})$, $n\to \infty$.
\end{thm}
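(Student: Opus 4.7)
The plan is to count the roots of $P_n(x) := x^n - ax - 1$ inside the closed unit disk by the argument principle and then obtain $\nu_{n,a}$ by subtraction from $n$. Parametrizing the unit circle by $x = e^{i\theta}$, one has $|e^{in\theta}| = 1$ and $|ae^{i\theta}+1|^2 = 1 + a^2 + 2a\cos\theta$, so the two monomials of $P_n$ have equal modulus precisely when $\cos\theta = -a/2$, i.e.\ at $\theta = \pm\theta_0$ with $\theta_0 := \arccos(-a/2)$. On the inner arc $|\theta| < \theta_0$ one has $|ae^{i\theta}+1| > 1$, and on the outer arc $|\theta| > \theta_0$ the reverse. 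Once it is verified that $P_n$ has no zero on $|x|=1$ (the equation $e^{in\theta} = ae^{i\theta}+1$ forces $\cos\theta = -a/2$, and the resulting phase constraint fails except for finitely many $n$, handled separately), the argument principle gives
\[
n - \nu_{n,a} \;=\; \frac{1}{2\pi}\,\bigl[\arg P_n(e^{i\theta})\bigr]_{\theta=0}^{\theta=2\pi}.
\]

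Fix $\epsilon > 0$ and split $\theta \in (-\pi,\pi]$ into the outer part $A_{\mathrm{out}}^\epsilon := \{|\theta-\pi| < \pi-\theta_0-\epsilon\}$, the inner part $A_{\mathrm{in}}^\epsilon := \{|\theta| < \theta_0-\epsilon\}$, and two transition arcs of total length $4\epsilon$ around $\pm\theta_0$. On $A_{\mathrm{out}}^\epsilon$ factor $P_n(e^{i\theta}) = e^{in\theta}\bigl(1 - e^{-in\theta}(ae^{i\theta}+1)\bigr)$: the bracket has modulus bounded away from $0$ uniformly in $n$ (the bound depending only on $\epsilon$), so its argument stays in a fixed bounded interval, and the total argument change of $P_n(e^{i\theta})$ over $A_{\mathrm{out}}^\epsilon$ equals $2n(\pi-\theta_0-\epsilon) + O(1)$. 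Symmetrically, on $A_{\mathrm{in}}^\epsilon$ factor $P_n(e^{i\theta}) = -(ae^{i\theta}+1)\bigl(1 - e^{in\theta}/(ae^{i\theta}+1)\bigr)$: the bracket again has bounded argument variation, while $\arg\bigl(-(ae^{i\theta}+1)\bigr)$ itself varies by at most $2\pi$ (its image is an arc of the circle of radius $a$ about $-1$), so the contribution on $A_{\mathrm{in}}^\epsilon$ is $O(1)$. The transition arcs contribute at most $O(n\epsilon + 1)$, since $\arg e^{in\theta}$ changes at rate $n$ and the correction factor is Lipschitz in $\theta$. Summing the three contributions and dividing by $2\pi$,
\[
\frac{\nu_{n,a}}{n} \;=\; \frac{\theta_0}{\pi} + O(\epsilon) + O(1/n),
\]
and letting first $n\to\infty$ and then $\epsilon \to 0$ proves the theorem.

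The main technical difficulty is the transition estimate: to justify the $O(n\epsilon+1)$ bound on the winding over the arcs around $\pm\theta_0$ one needs a uniform-in-$n$ lower bound on $|P_n(e^{i\theta})|$ there, equivalently ruling out an excessive cluster of roots of $P_n$ very close to $e^{\pm i\theta_0}$. Secondary points to handle are the boundary case $a = 2$, where $\theta_0 = \pi$ so that $A_{\mathrm{out}}^\epsilon$ is empty and the whole circle (minus $\theta = \pi$) plays the role of $A_{\mathrm{in}}$---the image $-(2e^{i\theta}+1)$ winds once around $0$, giving $\nu_{n,2}/n \to 1 = \frac{1}{\pi}\arccos(-1)$ directly---and the finitely many $n$ for which $P_n$ actually has a zero on $|x|=1$ (e.g.\ $x=-1$ when $a = 2$ and $n$ is odd), which shift the count only by $O(1)$.
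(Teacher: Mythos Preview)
Your approach via the argument principle is genuinely different from the paper's. The paper instead analyses the roots directly: writing a root as $\rho e^{i\varphi}$, the lima\c{c}on equation $\rho^{2n}=a^2\rho^2+2a\rho\cos\varphi+1$ is derived, and it is shown that this defines $\rho$ as a decreasing function of $\varphi$ on $[0,\pi)$ for large $n$, so that $\rho>1$ precisely when $\varphi<\arccos(-a/2)$. The count then follows from the approximate equidistribution of the arguments $\varphi_j$, established by locating one root argument in each interval of length $2\pi/n$ (or, as the paper notes, via Erd\H{o}s--Tur\'an). Your route avoids this geometric analysis and is in principle cleaner, but the paper's approach yields extra structure (monotonicity of $\rho(\varphi)$, the explicit partition of arguments) that is reused in the proof of Theorem~2.

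The gap is exactly where you place it: the transition bound. Your stated reason --- that $\arg e^{in\theta}$ changes at rate $n$ and the correction factor is Lipschitz --- does not suffice, because on the transition arcs neither factoring $P_n=e^{in\theta}(1-\cdots)$ nor $P_n=-(ae^{i\theta}+1)(1-\cdots)$ gives a bracket bounded away from $0$, and a Lipschitz curve passing near the origin can wind many times. A quick fix: $\mathrm{Im}\,P_n(e^{i\theta})=\sin n\theta - a\sin\theta$; over an arc of length $2\epsilon$ the slowly varying term $a\sin\theta$ changes by $O(\epsilon)$ while $\sin n\theta$ completes $O(n\epsilon)$ full oscillations, so the imaginary part has $O(n\epsilon)+O(1)$ zeros there and the argument variation is at most $\pi$ times that. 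Alternatively, Erd\H{o}s--Tur\'an bounds the number of roots with argument in $[\theta_0-\epsilon,\theta_0+\epsilon]$ by $O(n\epsilon)+O(\sqrt{n})$, which controls the winding directly. A minor correction: the assertion that $P_n$ has a root on $|x|=1$ for only finitely many $n$ is false in general (for $a=\sqrt{2}$ there is a root at $e^{3\pi i/4}$ for every $n\equiv 6\pmod 8$); integrate instead over $|x|=1-\delta_n$ with $\delta_n\to 0$ chosen to avoid roots, which changes the count by $O(1)$.
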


Let $\textrm{M}(\alpha) =|b_0|\prod_{i=1}^n \max(|\alpha_i|, 1)$ denote the Mahler measure of $\alpha$ (and of p).
Jensen's formula which states that
\[\int_0^1\ln|p(e^{2\pi i\theta})|d\theta=\ln|b_0|+\sum_{i=1}^{n}\ln \max(|\alpha_i|,1), \]
leads us to the following result \[ \textrm{M}(p)=\exp \left(\int_0^1 \ln |p(e^{2\pi i\theta})|d\theta\right).\]
Thus Mahler measure could be extended to polynomials in several variables
\[ \textrm{M}(p)=\exp \left(\int_0^1\cdots\int_0^1 \ln |p(e^{2\pi i\theta_1},\ldots,e^{2\pi i\theta_m})|d\theta_1\cdots d\theta_m \right).\]

Lehmer \cite{Leh} asked:
(L) Does there exist a constant $c_0 > 1$ so that $\textrm{M}(\alpha) > c_0$ for all $\alpha$ not roots of
unity? The smallest known Mahler measure (greater than 1) is for a root $\alpha$ of the polynomial
$x^{10}+x^9-x^7-x^6-x^5-x^4-x^3+x+1$ for which the Mahler measure is the Salem number
$\textrm{M}(\alpha)=1.176280818\dots $ It is widely believed that this number represents the minimal value in Lehmer's conjecture.

Let $\alpha$ be the Perron number having minimal polynomial $x^n-x-1$. Taking $a=1$ in the formula of the following theorem we are able to calculate the limit of the Mahler measure of $\alpha$, $n\to\infty$.
\begin{thm}\label{cha:Product}
The product of roots of the trinomial $x^n-ax-1$, $a\in (0,2]$, which are greater than 1 in modulus, tends to \[\exp\left(\frac{1}{2\pi}\int_0^{\arccos(-\frac{a}{2})}\ln(1+a^2+2a\cos t)dt \right),\;\; n\to \infty.\]
\end{thm}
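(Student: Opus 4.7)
My plan is to recognize the product $\prod_{|\alpha_i|>1}|\alpha_i|$ as the Mahler measure $\mathrm{M}(p_n)$ of $p_n(x):=x^n-ax-1$ (which is valid because the leading coefficient equals $1$), and then compute $\lim_n \mathrm{M}(p_n)$ via Jensen's formula together with a two-variable limit argument. Jensen gives
\[\ln \mathrm{M}(p_n) \;=\; \frac{1}{2\pi}\int_0^{2\pi}\ln\bigl|e^{in\theta}-ae^{i\theta}-1\bigr|\,d\theta.\]
The key reduction is to identify the limit of this integral as the two-variable Mahler measure
\[\ln \mathrm{M}(y-ax-1) \;=\; \frac{1}{(2\pi)^2}\int_0^{2\pi}\!\!\int_0^{2\pi}\ln\bigl|e^{i\phi}-ae^{i\theta}-1\bigr|\,d\phi\,d\theta,\]
the intuition being that $(e^{i\theta},e^{in\theta})$ equidistributes on the torus as $n\to\infty$; this is the content of Lawton's theorem applied to $Q(x,y)=y-ax-1$.

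Once that reduction is established, evaluating the double integral is routine. Jensen's formula in the $\phi$-variable collapses the inner integral to $\ln\max(|ae^{i\theta}+1|,1)$. Using $|ae^{i\theta}+1|^2=1+a^2+2a\cos\theta$, the condition $|ae^{i\theta}+1|\ge 1$ translates into $\cos\theta\ge -a/2$, i.e.\ $|\theta|\le\arccos(-a/2)$ (note $-a/2\in[-1,0)$ for $a\in(0,2]$). Exploiting the symmetry $\theta\leftrightarrow-\theta$ and writing $\ln|ae^{i\theta}+1|=\tfrac12\ln(1+a^2+2a\cos\theta)$ then reduces the outer integral to
\[\frac{1}{2\pi}\int_0^{\arccos(-a/2)}\!\!\ln(1+a^2+2a\cos\theta)\,d\theta,\]
which is the logarithm of the claimed limit.

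The main obstacle is the convergence step. The integrand $\ln|p_n(e^{i\theta})|$ has logarithmic singularities at every root of $p_n$ on the unit circle, and $\Theta(n)$ roots cluster within $O(1/n)$ of it, so the singularities must be controlled uniformly in $n$. Invoking Lawton's theorem as a black box disposes of this cleanly. For a self-contained argument one can instead reuse the angular root analysis from the proof of Theorem \ref{cha:nuApprox}: writing each root as $\alpha_j=e^{i\phi_j+\rho_j/n}$, one has $\phi_j$ approximately equispaced at gaps $2\pi/n$ and $\rho_j\sim\ln|ae^{i\phi_j}+1|$, so that $\sum_{\rho_j>0}\rho_j/n$ is a Riemann sum for $(1/(2\pi))\int\ln|ae^{i\phi}+1|\,d\phi$ over the arc $\{\cos\phi>-a/2\}$. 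The only delicate point is then to show that roots whose $\phi_j$ is within $o(1)$ of the boundary $\cos\phi=-a/2$ contribute negligibly, which follows from the continuity of the integrand and the uniform separation of the $\phi_j$.
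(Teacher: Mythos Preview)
Your primary route---identify the product with the Mahler measure $\mathrm{M}(p_n)$, invoke the Boyd/Lawton limit $\mathrm{m}(Q(x,x^n))\to \mathrm{m}(Q(x,y))$ with $Q(x,y)=y-ax-1$, and then evaluate the two-variable measure by an inner Jensen integral---is correct, and your computation of the double integral is clean and accurate. This is, however, not the route the paper takes for its own proof. The paper argues directly and elementarily: from the relation $\rho_i^{2n}=a^2\rho_i^2+2a\rho_i\cos\varphi_i+1$ it writes $\ln\rho_i=\tfrac1{2n}\ln(a^2\rho_i^2+2a\rho_i\cos\varphi_i+1)$, bounds $\rho_i\in[1,\beta]$ with $\beta\to1$, and then recognises the sum over roots with $\varphi_i\in[0,\arccos(-a/2))$ as a Riemann sum (using the equispacing lemma) for the claimed integral. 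This is essentially your ``self-contained'' alternative.

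It is worth noting that the paper explicitly records your black-box approach in a remark attributed to Dubickas, citing Boyd's limit formula and the Maillot/Vandervelde evaluation. So your argument is anticipated by the paper as an alternative, shorter but less self-contained. The trade-off is clear: the Lawton/Boyd route hides the analytic difficulty (uniform control of the logarithmic singularities) inside a known theorem and yields the answer in a few lines; the paper's direct argument is elementary and stays entirely within the framework of the preceding lemmas on root location, at the cost of a longer chain of estimates. One small notational quibble: in your sketch of the self-contained version, writing a root as $e^{i\phi_j+\rho_j/n}$ is ambiguous; you presumably mean $|\alpha_j|=e^{\rho_j/n}$ so that $2\rho_j\sim\ln(1+a^2+2a\cos\phi_j)$, which is exactly the paper's substitution.
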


\section{Lima\c{c}on}

If we represent roots of the trinomial $x^n-x-1$ in the complex plane we can notice that all of them lay on a heart shape curve.
\begin{figure}[!htbp]
\caption{Roots of polynomials $x^{24}-x-1$, $x^{24}-1$}
\begin{center}
\includegraphics {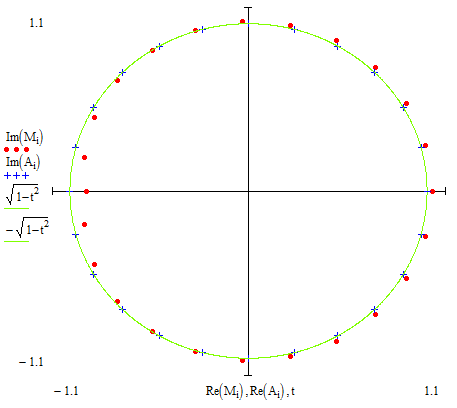}
\end{center}
\end{figure}

\begin{lem}\label{sec:Limacon}
Let $\alpha=\rho(\cos\varphi+i\sin\varphi)$ be a root of the trinomial $x^n-ax-1$. Then $\rho$, $\varphi$ satisfy equations
\begin{equation}\label{Trig1}
\rho^{2n}=a^2\rho^2+2a\rho\cos\varphi+1
\end{equation}
\begin{equation}\label{Trig2}
a\rho\sin(n-1)\varphi+\sin n\varphi=0.
\end{equation}
\end{lem}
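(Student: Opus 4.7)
The natural starting point is the defining relation $\alpha^n = a\alpha + 1$. Writing $\alpha = \rho(\cos\varphi + i\sin\varphi)$ and using de Moivre, I would split this into the two real equations
\begin{equation*}
\rho^n\cos n\varphi = a\rho\cos\varphi + 1, \qquad \rho^n\sin n\varphi = a\rho\sin\varphi,
\end{equation*}
by equating real and imaginary parts.

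The first claim (\ref{Trig1}) falls out immediately: it is just $|\alpha^n|^2 = |a\alpha + 1|^2$. Concretely, squaring and adding the two displayed equations above collapses the trigonometric cross terms and leaves $\rho^{2n} = a^2\rho^2 + 2a\rho\cos\varphi + 1$. (Equivalently, one could take the modulus squared of $\alpha^n = a\alpha + 1$ directly, using $|a\alpha+1|^2 = a^2|\alpha|^2 + a(\alpha+\bar\alpha)+1$.)

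For (\ref{Trig2}) the plan is to produce a companion relation involving $\sin(n-1)\varphi$ and then eliminate the factor $\rho^n$. Multiplying the imaginary-part equation by $\cos\varphi$, the real-part equation by $\sin\varphi$, and subtracting lets me invoke the identity $\sin n\varphi\cos\varphi - \cos n\varphi\sin\varphi = \sin(n-1)\varphi$, which produces the clean companion
\begin{equation*}
\rho^n \sin(n-1)\varphi = -\sin\varphi.
\end{equation*}
Combining this with $\rho^n\sin n\varphi = a\rho\sin\varphi$ (to cancel $\sin\varphi$) gives $\sin n\varphi = -a\rho\sin(n-1)\varphi$, which is exactly (\ref{Trig2}). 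The argument is purely computational; there is no real obstacle beyond choosing the right trigonometric identity. The only tacit assumption is $\alpha \neq 0$ (so $\rho>0$), which holds because $x=0$ is plainly not a root of $x^n-ax-1$.
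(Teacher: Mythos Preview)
Your proof is correct and follows essentially the same approach as the paper: separate $\alpha^n=a\alpha+1$ into real and imaginary parts, take moduli squared for \eqref{Trig1}, and eliminate $\rho^n$ using the sine subtraction formula for \eqref{Trig2}. The only cosmetic difference is that the paper multiplies the two real equations by $\sin n\varphi$ and $\cos n\varphi$ (eliminating $\rho^n$ in one step), whereas you multiply by $\cos\varphi$ and $\sin\varphi$ to obtain the companion $\rho^n\sin(n-1)\varphi=-\sin\varphi$ and then substitute; both routes are straightforward and equivalent.
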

\begin{proof}
Since $\alpha$ satisfies the equation $x^n=ax+1$ we have
\begin{equation}\label{Trig3}
\rho^n(\cos n\varphi+i\sin n\varphi)=a\rho(\cos\varphi+i\sin\varphi)+1.
\end{equation}
If we determine the square of the modulus of both sides of the equation \eqref{Trig3} we get
\begin{eqnarray*}
\rho^{2n} &= &(a\rho\cos\varphi+1)^2+(a\rho\sin\varphi)^2\\
     &=&a^2\rho^2+2a\rho\cos\varphi+1\\
\end{eqnarray*}
If we separate real and imaginary part of the equation \eqref{Trig3} we get the system of two equations:
\[ \rho^n\cos n\varphi=a\rho\cos\varphi+1\]
\begin{equation}\label{Trig5}
\rho^n\sin n\varphi=a\rho\sin\varphi.
\end{equation}
If we multiply first of them with $\sin n\varphi$ and second with $\cos n\varphi$ we get the equations with equal left sides, so the difference of its right sides must be equal to 0:
\begin{equation}\label{Trig4}
(a\rho\cos\varphi+1)\sin n\varphi -a\rho\sin\varphi\cos n\varphi=0.
\end{equation}
Now it is obvious that
\[a\rho(\cos\varphi\sin n\varphi-\sin\varphi\cos n\varphi)+\sin n\varphi=0.\]
Using the well known formula of sine of difference of two angles we finally get the equation \eqref{Trig2}.
\end{proof}

\begin{lem}\label{sec:RealZero}
The trinomial $P(x)=x^n-ax-1$ has a unique positive real root $\beta > 1$  if $a>0$. Furthermore $\beta$ converge to 1 above when $n$ tends to infinity.
\end{lem}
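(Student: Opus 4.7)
My plan is to treat the two assertions separately, handling existence/uniqueness first via a convexity argument, then the limit via a direct estimate from the defining equation.

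For existence and uniqueness of a positive real root $\beta > 1$, I would begin by noting the boundary values $P(0) = -1 < 0$, $P(1) = 1 - a - 1 = -a < 0$, and $\lim_{x\to\infty}P(x) = +\infty$. The intermediate value theorem then yields at least one root $\beta \in (1,\infty)$. For uniqueness on $(0,\infty)$, I would use that $P''(x) = n(n-1)x^{n-2} \ge 0$ on $[0,\infty)$, so $P$ is convex there. A convex function that is negative at the origin can cross zero on $(0,\infty)$ at most twice, but since $P$ is negative at $0$ and tends to $+\infty$, a second crossing would require $P$ to dip back below zero after becoming positive, contradicting convexity. Hence the positive root $\beta$ is unique, and since $P(1) < 0$ we must have $\beta > 1$.

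For the convergence $\beta \to 1^+$ as $n \to \infty$, the key observation is that $\beta$ satisfies
\begin{equation*}
\beta^n = a\beta + 1.
\end{equation*}
Since $\beta > 1$, we obtain $1 < \beta$, so $1 \le \beta$, and therefore
\begin{equation*}
\beta^n = a\beta + 1 \le a\beta + \beta = (a+1)\beta,
\end{equation*}
which gives $\beta^{n-1} \le a+1$, i.e.\ $\beta \le (a+1)^{1/(n-1)}$. Since $(a+1)^{1/(n-1)} \to 1$ as $n \to \infty$ and $\beta > 1$, the squeeze $1 < \beta \le (a+1)^{1/(n-1)}$ forces $\beta \to 1^+$.

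I do not anticipate any serious obstacle in this lemma; the proof is essentially a short exercise in calculus. The only subtlety worth flagging is to insist on convexity (rather than merely checking $P'$) to handle uniqueness in one stroke, and to use the defining equation $\beta^n = a\beta+1$ to bypass any need for delicate asymptotics when proving $\beta \to 1^+$.
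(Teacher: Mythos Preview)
Your argument is correct. The routes differ in both halves, so a brief comparison is worthwhile.

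For uniqueness of the positive root the paper simply invokes Descartes' rule of signs: the coefficient sequence $1,0,\dots,0,-a,-1$ has exactly one sign change, hence exactly one positive real zero. Your convexity argument reaches the same conclusion but is a little longer; Descartes' rule is the more economical device here, while your approach has the advantage of not relying on any named theorem beyond the intermediate value theorem.

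For the convergence $\beta\to 1^{+}$ the paper evaluates $P$ at $1+\sqrt[n-1]{a}$ to trap $\beta$ in $(1,\,1+\sqrt[n-1]{a})$. Note, however, that $1+\sqrt[n-1]{a}\to 2$ as $n\to\infty$, so as written this interval does \emph{not} squeeze $\beta$ down to $1$; the paper's computation also drops the term $-a$ coming from $-a\cdot 1$ in $-a(1+\sqrt[n-1]{a})$. Your bound, obtained directly from the defining relation $\beta^{n}=a\beta+1\le(a+1)\beta$, gives $\beta\le(a+1)^{1/(n-1)}\to 1$, which is both cleaner and actually does the job. In effect your upper bound is what the paper's argument should have produced.
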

\begin{proof} By Descartes' rule of signs $P(x)$ has exactly one positive root. If $x=1$ then $P(x)=-a<0$. If $x=1+\sqrt[n-1]{a}$ then $P(x)>1+\sqrt[n-1]{a^n}-a\sqrt[n-1]{a}-1=0$. So there is a unique positive real root $\beta > 1$ on $(1,1+\sqrt[n-1]{a})$ which converge to 1 above when $n$ tends to infinity.
\end{proof}
\begin{lem}\label{sec:RealZero1}
All roots of the trinomial $P(x)=x^n-ax-1$, $a>0$ lie in the circle $|x|\le\beta$ where $\beta > 1$ is its unique positive real root.
\end{lem}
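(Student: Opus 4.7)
The plan is to use the triangle inequality together with the sign behaviour of $P$ restricted to the positive real axis, so that the modulus of an arbitrary root can be squeezed against $\beta$.

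First I would take any complex root $\alpha$ of $P$, so $\alpha^n = a\alpha + 1$. Passing to moduli and applying the triangle inequality gives
\[
|\alpha|^n \;=\; |a\alpha + 1| \;\le\; a|\alpha| + 1,
\]
which can be rewritten as $Q(|\alpha|) \le 0$, where $Q(r) := r^n - ar - 1$ is just $P$ regarded as a real function of a nonnegative real variable $r$.

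Next I would study the sign of $Q$ on $[0,\infty)$. We have $Q(0) = -1 < 0$ and $Q(r) \to \infty$ as $r \to \infty$. Moreover $Q'(r) = n r^{n-1} - a$ vanishes at the unique positive point $r_0 = (a/n)^{1/(n-1)}$, so $Q$ is strictly decreasing on $[0,r_0]$ and strictly increasing on $[r_0,\infty)$. Consequently $Q(r_0)$ is negative and $Q$ has exactly one positive zero, namely the $\beta > 1$ produced by Lemma~\ref{sec:RealZero}; in particular
\[
Q(r) \le 0 \text{ for } 0 \le r \le \beta, \qquad Q(r) > 0 \text{ for } r > \beta.
\]

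Combining the two steps, $Q(|\alpha|) \le 0$ forces $|\alpha| \le \beta$, which is the claim. There is no real obstacle here: the argument is a one-line triangle-inequality reduction plus a convexity/monotonicity inspection of the real trinomial, and the only point that requires a sentence of care is verifying that $Q$ changes sign exactly once on $(0,\infty)$, which follows from the single critical point at $r_0$ and the boundary values.
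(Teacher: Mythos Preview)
Your argument is correct and is essentially the same as the paper's: both use the triangle inequality to get $P(|\alpha|)\le 0$ (equivalently, the paper shows $|P(x)|\ge P(|x|)>0$ for $|x|>\beta$) and then invoke the sign pattern of $P$ on $[0,\infty)$. The only difference is cosmetic --- you spell out the single-critical-point analysis of $Q$, whereas the paper simply cites the previous lemma for the uniqueness of $\beta$ and the positivity of $P(r)$ for $r>\beta$.
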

\begin{proof} The existence of $\beta$ is proved in the previous lemma. If $|x|>\beta$ then $P(|x|)=|x|^n-a|x|-1>0$. Since $|P(x)|\ge|x|^n-a|x|-1=P(|x|)>0$ we conclude that all roots of the trinomial $P(x)$ must be in the circle $|x|\le\beta$.
\end{proof}

\begin{lem}\label{sec:RealZero2}
If $a>0$ the polynomial $P_1(x)=1-x^n-ax$ is decreasing on $(0,\infty)$ and has unique positive real root  $\alpha < 1$. Then there is no root of the trinomial $P(x)=x^n-ax-1$ in the circle $|x|<\alpha$.
\end{lem}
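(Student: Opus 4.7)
The plan is to dispatch the two parts of the lemma in sequence. For the monotonicity and the location of $\alpha$, I would differentiate to obtain $P_1'(x) = -nx^{n-1} - a$, which is strictly negative on $(0,\infty)$ since $a>0$, so $P_1$ is strictly decreasing there. Combined with $P_1(0) = 1 > 0$ and $P_1(x) \to -\infty$ as $x \to \infty$, the intermediate value theorem produces a unique positive root $\alpha$. Evaluating $P_1(1) = -a < 0$ then forces $\alpha \in (0,1)$ by monotonicity.

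For the second assertion, I would take any complex $x$ with $|x| < \alpha$ and apply the reverse triangle inequality to $P(x) = x^n - ax - 1$:
\[|P(x)| = |x^n - ax - 1| \geq 1 - |x^n - ax| \geq 1 - |x|^n - a|x| = P_1(|x|).\]
Since $P_1$ is strictly decreasing on $(0,\infty)$ and $|x| < \alpha$, we obtain $P_1(|x|) > P_1(\alpha) = 0$, hence $P(x) \neq 0$. This parallels the upper-bound argument of Lemma \ref{sec:RealZero1}, with $P_1$ playing the role that $P$ played there and the monotonicity running in the opposite direction.

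There is no real obstacle: the only point worth watching is the direction in which the reverse triangle inequality is applied, namely that one isolates the constant term $-1$ so that the resulting lower bound is exactly $P_1(|x|)$. Once this identification is made, the monotonicity of $P_1$ delivers the conclusion immediately, and no further analytic input is needed.
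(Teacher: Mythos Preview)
Your proof is correct and follows essentially the same approach as the paper: the paper's argument is simply ``If $|x|<\alpha$ then $P_1(|x|)>0$. Since $|P(x)|\ge 1-|x|^n-a|x|=P_1(|x|)>0$ \ldots'', which is exactly your reverse-triangle-inequality bound. You supply the details of the monotonicity and the location of $\alpha$ via $P_1'(x)<0$, $P_1(0)>0$, $P_1(1)<0$, which the paper takes for granted, but the core idea is identical.
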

\begin{proof} If $|x|<\alpha$ then $P_1(|x|)>0$. Since $|P(x)|\ge 1-|x|^n-a|x|=P_1(|x|)>0$ we conclude that all roots of the trinomial $P(x)$ must be out of the circle $|x|<\alpha$.
\end{proof}

\begin{lem}\label{sec:RealZero3}
If $a\in(1,2]$, there is $n_0$ such that if $n>n_0$ the polynomial $P_2(x)=ax-x^n-1$ has exactly two real roots $\gamma_1,\gamma_2 $ on $(0,1]$. Then there is exactly one root of the trinomial $P(x)=x^n-ax-1$ in the circle $K:|x|=\gamma$ for any $\gamma $ that satisfies $ \gamma_1<\gamma<\gamma_2$.
\end{lem}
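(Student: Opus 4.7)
The plan is to split the lemma into two independent parts: existence of the two positive roots $\gamma_1<\gamma_2$ of $P_2$ in $(0,1]$, to be settled by elementary calculus; and counting of roots of $P$ inside $|x|<\gamma$, to be settled by Rouch\'e's theorem.

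First I would analyse $P_2(x)=ax-x^n-1$ on $(0,\infty)$. Its derivative $P_2'(x)=a-nx^{n-1}$ vanishes only at $x^{*}=(a/n)^{1/(n-1)}$, with $P_2'>0$ on $(0,x^{*})$ and $P_2'<0$ on $(x^{*},\infty)$, so $P_2$ attains its maximum on the positive reals at $x^{*}$. Using $(x^{*})^{n}=(a/n)\,x^{*}$ one finds
\[P_2(x^{*})=ax^{*}\!\left(1-\frac{1}{n}\right)-1 \;\longrightarrow\; a-1>0\]
as $n\to\infty$, while $P_2(0)=-1<0$ and $P_2(1)=a-2\le 0$. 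Hence for every sufficiently large $n$, the intermediate value theorem supplies exactly two positive roots $\gamma_1<x^{*}\le\gamma_2\le 1$.

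For the second part I would apply Rouch\'e's theorem on the circle $|x|=\gamma$ with $f(x)=P(x)=x^{n}-ax-1$ and $g(x)=-(ax+1)$. The identity $|g(\gamma e^{i\theta})|^{2}=a^{2}\gamma^{2}+2a\gamma\cos\theta+1$ shows that $\min_{|x|=\gamma}|g(x)|=|a\gamma-1|$. Since $P_2(1/a)=-a^{-n}<0$ and $P_2$ is negative on $[0,\gamma_1)$, one has $\gamma_1>1/a$, so $a\gamma>1$ and this minimum equals $a\gamma-1$. The required inequality $a\gamma-1>\gamma^{n}$ is literally $P_2(\gamma)>0$, which holds for every $\gamma\in(\gamma_1,\gamma_2)$. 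Consequently on $|x|=\gamma$,
\[|f(x)-g(x)|=\gamma^{n}<a\gamma-1\le|g(x)|,\]
and Rouch\'e yields that $P$ has as many zeros in $|x|<\gamma$ as $g$. Since $g$ has the unique zero $-1/a$, whose modulus satisfies $1/a<\gamma_1<\gamma$, the count is exactly one.

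The only real nuisance is establishing $\gamma_1>1/a$, which is needed to replace $|a\gamma-1|$ by $a\gamma-1$ (rather than $1-a\gamma$) in the Rouch\'e estimate; this is settled by evaluating $P_2$ at $1/a$ as above. Strictness in the Rouch\'e inequality also precludes any root of $P$ from sitting on the circle $|x|=\gamma$, so the unique zero lies in the open disk $|x|<\gamma$.
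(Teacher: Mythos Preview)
Your proof is correct and, like the paper, relies on Rouch\'e's theorem; the difference is in the splitting of $P$. The paper writes $P(x)=(-ax)+(x^{n}-1)$ and estimates on $|x|=\gamma$
\[
|{-ax}|-|x^{n}-1|\ge a\gamma-(\gamma^{n}+1)=P_2(\gamma)>0,
\]
so $|x^{n}-1|<|{-ax}|$ and $P$ has as many zeros in $|x|<\gamma$ as $-ax$, namely one (the zero at the origin). This avoids the extra verification that $\gamma_1>1/a$: the paper's dominant term $-ax$ has constant modulus $a\gamma$ on the circle, whereas your dominant term $-(ax+1)$ has minimum modulus $|a\gamma-1|$, forcing you to determine the sign of $a\gamma-1$.

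Your calculus argument for the existence and location of $\gamma_1,\gamma_2$ in $(0,1]$ is a welcome addition; the paper states that part of the lemma without proof. The check $P_2(1/a)=-a^{-n}<0$ together with monotonicity of $P_2$ on $(0,x^{*})$ is a clean way to locate $1/a$ below $\gamma_1$, and it also confirms that the single zero $-1/a$ of your comparison function actually lies in the disk. So both approaches are valid; the paper's splitting is a little more economical, while yours is more self-contained.
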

\begin{proof} If $|x|=\gamma$ then $P_2(|x|)>0$. Since $|-ax|-|x^n-1|\ge |-a||x|-|x|^n-1=P_2(|x|)>0$ we conclude that $|x^n-1|<|-ax|$ on $K$. Now we can use Rouch\'{e}'s theorem and conclude that $-ax$ and $P(x)=x^n-ax-1$  have the same number of zeros inside $K$.
\end{proof}
\begin{rem} If n is odd then $P(-\gamma_1)=P_2(\gamma_1)=0$ which means that $-\gamma_1$ is this exactly one root of the trinomial $P(x)$ in $K$. Lemma \ref{sec:RealZero3} allows us to conclude that there are no roots of $P(x)$ in the ring $\{x:\gamma_1<|x|<\gamma_2\}$.
\end{rem}
\begin{lem}\label{sec:ModuoZero}
If $a\in(0,1]$ then any root of the trinomial $P_n(x)=x^n-ax-1$ can be arbitrary close to 1 in modulus when $n\to\infty$.
\end{lem}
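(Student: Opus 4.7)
The plan is to sandwich the modulus of every root of $P_n(x)=x^n-ax-1$ between two radii that both tend to $1$. The upper bound comes for free from Lemma~\ref{sec:RealZero1}: every root $x$ satisfies $|x|\le\beta_n$, where $\beta_n$ is the unique positive real root of $P_n$, and Lemma~\ref{sec:RealZero} already gives $\beta_n\to 1^+$. What remains is to produce a matching lower bound by showing that the unique positive real root $\alpha_n$ of $P_1(x)=1-x^n-ax$ satisfies $\alpha_n\to 1^-$, because Lemma~\ref{sec:RealZero2} guarantees $|x|\ge\alpha_n$ for every root $x$ of $P_n$.

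To prove $\alpha_n\to 1$, I would argue by contradiction using the defining identity $\alpha_n^n+a\alpha_n=1$ with $\alpha_n\in(0,1)$. Suppose $\alpha_n\not\to 1$; by passing to a subsequence, assume $\alpha_{n_k}\to L\in[0,1)$. For any $r$ with $L<r<1$ we have $\alpha_{n_k}<r$ eventually, so $\alpha_{n_k}^{n_k}<r^{n_k}\to 0$. Taking the limit in the defining equation yields $0=1-aL$, hence $L=1/a$. But $a\in(0,1]$ forces $1/a\ge 1$, contradicting $L<1$. Therefore $\alpha_n\to 1$.

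Combining the two bounds $\alpha_n\le|x|\le\beta_n$, valid for every root $x$ of $P_n$, with both endpoints tending to $1$, shows that every root of $P_n$ has modulus arbitrarily close to $1$ once $n$ is large enough. The only slightly delicate step is the passage to the limit along the subsequence, specifically the observation that $\alpha_{n_k}^{n_k}\to 0$ whenever $\alpha_{n_k}$ stays bounded away from $1$, but this is an elementary geometric-series estimate and I do not expect any real obstacle. The hypothesis $a\le 1$ enters exactly in forcing the final contradiction $L=1/a\ge 1$, which clarifies why the statement is restricted to $a\in(0,1]$ and fails to extend directly to $a>1$ (where the positive root of $P_1$ has a genuine limit strictly below $1$).
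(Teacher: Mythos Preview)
Your proof is correct and follows exactly the same route as the paper: sandwich every root's modulus between the $\alpha_n$ of Lemma~\ref{sec:RealZero2} and the $\beta_n$ of Lemmas~\ref{sec:RealZero}--\ref{sec:RealZero1}, and show both tend to $1$. The only difference is that the paper merely asserts $\alpha_n\to 1^-$ without justification, whereas you supply the (correct) subsequence-contradiction argument for it.
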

\begin{proof} If we determine $\beta$ for $P_n(x)$ as in Lemma \ref{sec:RealZero} and $\alpha$ as in Lemma \ref{sec:RealZero2} we can see that $\alpha$ converge to 1 below. We have already proven in Lemma \ref{sec:RealZero} that $\beta$ converge to 1 above as $n\to\infty$. Since moduli of all roots are between $\alpha$ and $\beta$ the statement is shoved.
\end{proof}

\begin{lem}\label{sec:ModuoZero1}
If $a\in(1,2]$ then there is a real root of the trinomial $P_n(x)=x^n-ax-1$ arbitrary close to $-\frac{1}{a}$. All other roots of the trinomial converge to 1 in modulus when $n\to\infty$.
\end{lem}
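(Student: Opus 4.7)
The lemma has two parts: (i) existence of a real root arbitrarily close to $-1/a$, and (ii) convergence to $1$ of the moduli of the remaining $n-1$ roots. My plan is to dispatch (i) by a direct intermediate value theorem argument and (ii) by sandwiching between Lemma \ref{sec:RealZero} and Lemma \ref{sec:RealZero3}.

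For (i), the key observation is that $-1/a$ is an approximate root in the limit: since $a>1$ gives $|-1/a|<1$, we have $P_n(-1/a)=(-1/a)^n-a(-1/a)-1=(-1/a)^n\to 0$. To extract an actual root nearby, I would fix any $\delta\in(0,1-1/a)$ so that both $-1/a\pm\delta$ lie in $(-1,0)$, and compute
\[
P_n(-1/a+\delta)=(-1/a+\delta)^n-a\delta,\qquad P_n(-1/a-\delta)=(-1/a-\delta)^n+a\delta.
\]
Because $|-1/a\pm\delta|<1$, the power terms vanish as $n\to\infty$, so the two values tend to $-a\delta<0$ and $+a\delta>0$ respectively. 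The IVT then produces a real root in $(-1/a-\delta,\,-1/a+\delta)$ for all sufficiently large $n$; letting $\delta\to 0$ yields the claim. This treats both parities of $n$ at once; for odd $n$ one could alternatively quote the Remark following Lemma \ref{sec:RealZero3}, which identifies $-\gamma_1$ as such a root.

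For (ii), I would combine the two quantitative bounds supplied by the preceding lemmas. Lemma \ref{sec:RealZero} gives $|\alpha|\le\beta$ for every root, with $\beta\to 1^+$. Lemma \ref{sec:RealZero3} asserts that for every $\gamma\in(\gamma_1,\gamma_2)$ exactly one root of $P_n$ lies in $|x|<\gamma$; letting $\gamma\to\gamma_2^-$ shows that one root has modulus at most $\gamma_1$ and the other $n-1$ roots have modulus at least $\gamma_2$. The remaining ingredient is $\gamma_2\to 1$: for $\epsilon\in(0,1-1/a)$,
\[
P_2(1-\epsilon)=(a-1)-a\epsilon-(1-\epsilon)^n\longrightarrow(a-1)-a\epsilon>0,
\]
so $P_2(1-\epsilon)>0$ for large $n$, forcing $\gamma_2>1-\epsilon$; for $a=2$ equality $P_2(1)=0$ gives $\gamma_2=1$ at once. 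Hence every one of the $n-1$ roots outside the small disc is squeezed in $[\gamma_2,\beta]\to\{1\}$.

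The two parts fit together consistently: the real root found in (i) has modulus $1/a+o(1)$, which is strictly less than $\gamma_1$ for large $n$ (one verifies $\gamma_1\to 1/a$ by the same kind of IVT estimate applied to $P_2$ at $1/a+\epsilon$), so Lemma \ref{sec:RealZero3} identifies it as the unique small-modulus root, and (ii) describes the rest. The step demanding the most care is the choice of $\delta$ in (i): one must keep $-1/a-\delta$ strictly inside the unit disc so that the $n$-th power term can be safely discarded. Once this is pinned down, the rest is a routine sandwich argument built entirely on the preceding lemmas.
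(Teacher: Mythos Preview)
Your proposal is correct and follows essentially the same route as the paper: a graphical/IVT argument locating a real root near $-1/a$, and then the sandwich $\gamma_2\le|\alpha_i|\le\beta$ from Lemmas~\ref{sec:RealZero} and~\ref{sec:RealZero3} for the remaining roots, with $\gamma_2\to1^-$ and $\beta\to1^+$. If anything, your treatment is more careful than the paper's, which simply appeals to the graphs of $x^n$ and $ax+1$ for part~(i) and asserts $\gamma_2\to1$ without the explicit estimate you supply.
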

\begin{proof} The equation $P_n(x)=0$ is equivalent with $x^n=ax+1$. We can see, from the graphic representation of these two functions, that they have an intersection point, corresponding to real root $r_1$, arbitrary close to $(-\frac{1}{a},0)$ when $n\to\infty$. If we determine $\beta$ for $P_n(x)$ as in Lemma \ref{sec:RealZero1} and $\gamma_2$ as in Lemma \ref{sec:RealZero3} we can see that $\gamma_2$ converge to 1 below and $\beta$ converge to 1 above as $n\to\infty$. Since moduli of all roots except $r_1$ are between $\gamma_2$ and $\beta$ the claim is proved.
\end{proof}
\begin{rem} If we use the substitution $\rho^{2n}=R$ and the fact that $\rho\sim 1$ then \eqref{Trig1} might be approximated with the equation $R=c+d\cos\varphi$. It represents a curve known as a lima\c{c}on of (\' Etienne) Pascal (father of Blaise Pascal). A much more known curve, the cardioid $R=2b(1+\cos\varphi)$, is a special case of a lima\c{c}on.
\end{rem}
Now we know that all roots, except eventually one, are in the ring arbitrary close to the unit circle. It is very important to determine their position in the complex plane more precisely. If $\alpha_1=\rho_1 e^{i\varphi_1}$, $\alpha_2=\rho_2 e^{i\varphi_2}$ are two roots of the trinomial $P_n(x)=x^n-ax-1$ such that $0<\varphi_1<\varphi_2<\pi$ then $\rho_1>\rho_2$ i.e. absolute value of a root decrease as its argument increase from 0 to $\pi$. This fact enables us a simple method to split roots which are in, from those that are out of the unit circle.

\begin{lem}\label{sec:ImplFunc}
If $a\in(0,2)$ the equation \eqref{Trig1} defines implicitly the function $\rho=\rho(\varphi)$ which is an decreasing function on $[0,\pi)$ if $n>n_0$.
\end{lem}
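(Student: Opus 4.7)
Plan. I would regard \eqref{Trig1} as the zero set of
\[
F(\rho,\varphi) = \rho^{2n} - a^2\rho^2 - 2a\rho\cos\varphi - 1,
\]
use Lemma \ref{sec:RealZero} to anchor the branch at $\rho(0) = \beta$, and apply the implicit function theorem to extend $\rho(\varphi)$ across $[0,\pi)$ and show $\rho'(\varphi) < 0$.

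The partial derivatives are $F_\varphi = 2a\rho\sin\varphi$ (strictly positive on $(0,\pi)$) and $F_\rho = 2n\rho^{2n-1} - 2a^2\rho - 2a\cos\varphi$. Eliminating $\cos\varphi$ via \eqref{Trig1} (i.e., using $2a\rho\cos\varphi = \rho^{2n} - a^2\rho^2 - 1$) and then multiplying by $\rho$ yields the compact form
\[
\rho\, F_\rho \;=\; (2n-1)\rho^{2n} - a^2\rho^2 + 1.
\]
Granted $F_\rho > 0$ along the branch, the implicit function theorem gives $\rho'(\varphi) = -F_\varphi/F_\rho < 0$, which is exactly the strict monotonicity claimed.

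The crux is therefore the positivity of $F_\rho$ on the principal branch, which I would handle by case analysis on $a\rho$. If $a\rho \leq 1$, then $\rho\, F_\rho \geq (2n-1)\rho^{2n} > 0$ at once. If $a\rho \geq n/(n-1)$, the inequality $\rho^{2n} \geq (a\rho - 1)^2$ (from $\cos\varphi \geq -1$) factorises as
\[
\rho\, F_\rho \;\geq\; 2(n-1)(a\rho - 1)\left(a\rho - \tfrac{n}{n-1}\right) \;\geq\; 0.
\]
The main obstacle is the narrow intermediate window $a\rho \in (1, n/(n-1))$, which shrinks to a point as $n \to \infty$; to close it I would substitute the sharper identity $\rho^{2n} = (a\rho - 1)^2 + 4a\rho\cos^2(\varphi/2)$ and let the extra positive term $4(2n-1)a\rho\cos^2(\varphi/2)$ absorb the $O(1/n)$ shortfall from the factored bound. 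Finally one must check that the principal branch, tracked continuously from $\rho(0) = \beta \to 1^+$, never crosses the singular locus $F_\rho = 0$: for $a \in (0,1]$ this is automatic because $\rho \leq \beta$ forces $a\rho \leq 1 + o(1)$, putting the branch in the first case; for $a \in (1,2)$ one analyses the two positive roots of $(2n-1)\rho^{2n} = a^2\rho^2 - 1$ (where $F_\rho$ vanishes) and, using that $\rho(\varphi) \to 1$ on the branch, shows they lie strictly below the branch value for all $n$ exceeding some $n_0 = n_0(a)$.
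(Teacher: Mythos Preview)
Your approach is essentially the paper's. The paper writes the curve as $\cos\varphi = g(\rho) := \dfrac{\rho^{2n}-a^2\rho^2-1}{2a\rho}$ and computes $g'(\rho) = \dfrac{(2n-1)\rho^{2n}-a^2\rho^2+1}{2a\rho^2}$, whose numerator is exactly your $\rho F_\rho$; thus monotonicity of $g$ is equivalent to your condition $F_\rho>0$, and the two setups are the same implicit-function argument in different notation.

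The one substantive difference is how the positivity of $(2n-1)\rho^{2n}-a^2\rho^2+1$ is obtained. Instead of your three-way case split on $a\rho$, the paper substitutes \eqref{Trig1} directly into this expression to get
\[
(2n-2)a^2\rho^2 + 2(2n-1)a\rho\cos\varphi + 2n,
\]
and then completes the square in $a\rho$, arriving at the lower bound $\dfrac{(2n-1)^2\sin^2\varphi-1}{2n-2}$, which is positive as soon as $|\sin\varphi|>\dfrac{1}{2n-1}$. This single computation replaces your entire case analysis, including the awkward window $a\rho\in\bigl(1,\tfrac{n}{n-1}\bigr)$; the extra term $4(2n-1)a\rho\cos^2(\varphi/2)$ you invoke there is essentially the same $\sin^2\varphi$ gain the paper's square-completion isolates in one stroke. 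Your route is correct but noticeably more laborious, and both arguments leave the same mild residual issue at $\varphi$ within $O(1/n)$ of $\pi$, which the paper handles (as you do) by restricting to $\varphi\in[0,\pi)$ and appealing to the behaviour of the branch near $\rho=1$.
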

\begin{proof} If we formally solve the equation in $\cos\varphi$ we get
\[\cos\varphi=\frac{\rho^{2n}-a^2{\rho}^2-1}{2a\rho}.\]
Let us denote the function on the right with $g(\rho)$. The previous two lemmas show that we need to analyse $g(\rho)$ in a neighborhood of 1. Since $\cos\varphi=g(\rho)$ we are interested only for $\rho$ such that $g(\rho)\in[-1,1]$. It is easy to show that $g(\beta)=1$ and  $g(1)=\frac{-a}{2}\in(-1,0)$. The first derivative of $g$ is
\[g'(\rho)=\frac{1}{2a}\frac{(2n-1)\rho^{2n}-a^2\rho^2+1}{\rho^2}.\]
If $\rho\in[1,\beta]$ then $g'(\rho)>\frac{1}{2a}\frac{(2n-1)-a^2\beta^2+1}{\beta^2}$ is greater than 0 if $n>n_1$. If $\rho<1$ and $a\rho>1$ it is obvious that $g'(\rho)$ could be negative. But if $g(\rho)=\cos\varphi\in(-1,1]$ then we intend to show that there is $n_2$ such that $g'(\rho)>0$ for all $n>n_2$.
Using the equation \eqref{Trig1} we have:
\begin{eqnarray*}
g'(\rho)&=&\frac{1}{2a}\frac{(2n-1)\rho^{2n}-a^2\rho^2+1}{\rho^2}   \\
     &=&\frac{1}{2a}\frac{(2n-1)(a^2\rho^2+2a\rho\cos\varphi+1)-a^2\rho^2+1}{\rho^2}\\
     &=&\frac{1}{2a}\frac{(2n-2)a^2\rho^2+2(2n-1)a\rho\cos\varphi+2n}{\rho^2}\\
     &=&\frac{1}{2a\rho^2}\left(\left(\sqrt{2n-2}a\rho+\frac{(2n-1)}{\sqrt{2n-2}}\cos\varphi\right)^2-\frac{(2n-1)^2}{2n-2}\cos^2\varphi+2n\right)\\
     &\ge&\frac{1}{2a\rho^2}\left(-\frac{(2n-1)^2}{2n-2}\cos^2\varphi+2n\right)\\
     &=&\frac{1}{2a\rho^2}\left(-\frac{m^2}{m-1}\cos^2\varphi+(m+1)\right),\;(m=2n-1)\\
     &=&\frac{1}{2a\rho^2}\frac{m^2(1-\cos^2\varphi)-1}{m-1}\\
     &=&\frac{1}{2a\rho^2}\frac{(2n-1)^2(1-\cos^2\varphi)-1}{2n-2}\\
     &>&0
 \end{eqnarray*}
when $n>n_2$, $\cos\varphi\ne -1$. If we take $n_0=\max(n_1,n_2)$ then $g(\rho)$ increase  and it is continuous, when $\rho$ is close to 1, thus there is $\beta_0<1$ such that $g:(\beta_0,\beta]\rightarrow(-1,1]$. Now we conclude that $\varphi=\arccos(g(\rho))$ decrease on $(\beta_0,\beta]$, for that reason it has an inverse function $\rho=\rho(\varphi)$ which is decreasing too and such that $\rho(\varphi):(0,\pi]\rightarrow(\beta_0,\beta]$.
\end{proof}

\begin{rem}\label{sec:RRealZero2}
It should be expected that $g'(\rho)$ could be negative if $g(\rho)<-1$ because for $a>1$, $n$ odd there might be two negative roots of $P(x)$ which correspond with $g(\rho)=\cos\varphi=-1$: first in a neighborhood of $\frac{-1}{a}$, second in a neighborhood of -1. If $\rho$ is between absolute values of these two roots $g(\rho)$ could not be monotonic.
\end{rem}

There is another property of roots of the trinomial $P(x)$ observed as points of the curve $\rho=\rho(\varphi)$: the adjacent roots on the curve are approximately equispaced in $\varphi$.

\begin{lem}\label{sec:RootPhi}
There is a partition of the interval $[0,(\lceil \frac{n}{2} \rceil -1)\frac{2\pi}{n}]$ on $\lceil \frac{n}{2} \rceil -1$ subintervals of the equal length $\frac{2\pi}{n}$, each of them contain exactly one $\varphi_j$ such that $\rho_j(\cos\varphi_j+i\sin\varphi_j)$ is a root of $P(x)$.
\end{lem}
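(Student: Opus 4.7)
My plan is to recast the condition that $\rho(\varphi)e^{i\varphi}$ is a root of $P(x)=x^n-ax-1$ as a congruence $\Phi(\varphi)\equiv 0\pmod{2\pi}$ for a strictly increasing phase function $\Phi$, and then count its solutions in each subinterval. Because Lemma \ref{sec:ImplFunc} defines $\rho(\varphi)$ so that \eqref{Trig1} holds, i.e.\ $|\rho^n e^{in\varphi}|=|a\rho e^{i\varphi}+1|$, the only remaining requirement for $\rho(\varphi)e^{i\varphi}$ to be a root of $P$ is that the arguments of these two complex numbers agree. Setting
\[\Psi(\varphi):=\arg\!\bigl(a\rho(\varphi)e^{i\varphi}+1\bigr),\qquad \Phi(\varphi):=n\varphi-\Psi(\varphi),\]
one has $\Psi(0)=0$ and $\Psi(\varphi)\in(0,\pi)$ for $\varphi\in(0,\pi)$, since $a\rho(\varphi)\sin\varphi>0$ keeps $a\rho(\varphi)e^{i\varphi}+1$ in the open upper half-plane. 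Hence $\rho(\varphi)e^{i\varphi}$ is a root of $P$ exactly when $\Phi(\varphi)\equiv 0\pmod{2\pi}$.

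The main technical step is to show that $\Phi$ is strictly increasing on $[0,(\lceil n/2\rceil-1)\frac{2\pi}{n}]$ for all large $n$. With $z(\varphi):=a\rho(\varphi)e^{i\varphi}+1$, equation \eqref{Trig1} gives $|z(\varphi)|=\rho(\varphi)^n$ and $\Psi'=\mathrm{Im}(z'/z)$. Implicit differentiation of $\cos\varphi=g(\rho)$ in the proof of Lemma \ref{sec:ImplFunc} yields $|\rho'(\varphi)|=O(1/n)$, so $|z'(\varphi)|=O(1)$. Combined with a uniform positive lower bound on $\rho(\varphi)^n$ (which follows from $\beta^n\to a+1$ by Lemma \ref{sec:RealZero} and from the analogous asymptotics for $\beta_0$ implicit in Lemmas \ref{sec:ModuoZero} and \ref{sec:ModuoZero1}), this gives $|\Psi'|=O(1)$, and hence $\Phi'(\varphi)=n-\Psi'(\varphi)>0$ once $n$ is large enough.

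With monotonicity in hand the counting is immediate. At the partition points $\varphi_k:=2k\pi/n$ one has $\Phi(\varphi_k)=2k\pi-\Psi(\varphi_k)$ with $\Psi(\varphi_k)\in[0,\pi)$, so $\Phi(\varphi_{k-1})\le 2(k-1)\pi$ while $\Phi(\varphi_k)<2k\pi$ for every $k\ge 1$. Strict monotonicity then forces $\Phi$ to attain the single multiple of $2\pi$ in this range, namely $2(k-1)\pi$, exactly once on $[\varphi_{k-1},\varphi_k]$, producing a unique $\varphi_j$ there for which $\rho(\varphi_j)e^{i\varphi_j}$ is a root of $P$; for $k=1$ this $\varphi_j$ is $0$ itself, corresponding to the real root $\beta$. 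The chief obstacle is the uniform lower bound on $\rho(\varphi)^n$, because $a\rho e^{i\varphi}+1$ collapses to $1-a\beta_0$ as $\varphi\to\pi$ and for $a\ge 1$ this quantity is small; the cut-off $\pi-2\pi/n$ (or $\pi-\pi/n$ when $n$ is odd) built into $\lceil n/2\rceil-1$ is precisely what keeps $|z|$ from shrinking too fast.
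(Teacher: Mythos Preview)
Your phase-function strategy is genuinely different from the paper's argument.  The paper does not introduce $\Psi$ or $\Phi$; instead it rewrites \eqref{Trig4} as
\[
\frac{a\rho\cos\varphi+1}{a\rho\sin\varphi}=\cot n\varphi,
\]
notes that on each interval $I_k=(k\pi/n,(k+1)\pi/n)$ the left side varies slowly while $\cot n\varphi$ sweeps from $+\infty$ to $-\infty$, and then uses \eqref{Trig5} to discard the intersections with $\sin n\varphi<0$, keeping exactly one argument in each interval $[2m\pi/n,\,2(m+1)\pi/n]$.  Your formulation packages both the trigonometric identity and the sign condition into the single monotone function $\Phi$, and the endpoint counting $\Phi(\varphi_{k-1})\le 2(k-1)\pi<\Phi(\varphi_k)<2k\pi$ is clean.

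There is, however, a real gap precisely where you flag the ``chief obstacle.''  The uniform lower bound on $\rho(\varphi)^n$ that you invoke to get $|\Psi'|=O(1)$ is false for $a=1$, even after the cut-off.  At $\varphi=\pi-2\pi/n$ one has
\[
\rho^{2n}=\rho^2-2\rho\cos\tfrac{2\pi}{n}+1\approx(\rho-1)^2+\tfrac{4\pi^2}{n^2},
\]
and balancing this against $\rho^{2n}\approx e^{-2n(1-\rho)}$ forces $1-\rho\sim(\ln n)/n$, hence $|z|=\rho^n\sim(\ln n)/n\to 0$.  So neither Lemma~\ref{sec:ModuoZero} nor Lemma~\ref{sec:ModuoZero1} supplies the bound you assert, and $|\Psi'|=O(1)$ does not follow.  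The argument is salvageable: the same estimate gives $g'(\rho)\sim(\ln^2 n)/n$ at that point, so $|\rho'|=|\sin\varphi|/g'(\rho)=O(1/\ln^2 n)$, $|z'|=O(1)$, and therefore $|\Psi'|\le|z'|/|z|=O(n/\ln n)$, which is still $<n$ and suffices for $\Phi'>0$.  But this sharper analysis is what is actually needed, and it is not what you wrote.  The paper's $\cot n\varphi$ formulation is more forgiving here, since the sweep from $+\infty$ to $-\infty$ forces an intersection regardless of how small $\rho^n$ becomes.
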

\begin{proof}
A root of $P(x)$ satisfies \eqref{Trig4} which gives
\[\frac{a\rho\cos\varphi+1}{a\rho\sin\varphi}=\cot n\varphi\]
Since $\rho\approx 1$ the function on the left, let call it $R(\varphi)$, is approximately a constant on $I_k=(\frac{k\pi}{n},\frac{(k+1)\pi}{n})$, $k=1,2,\ldots,n-2$. The graph of $\cot n\varphi$ consists of parallel equispaced cotangents branches. We conclude that there is exactly one intersection point on $I_k$. Since $\rho>0$, if we bring to mind \eqref{Trig5}, it is obvious that $\sin n\varphi$ must be positive. So only intersection points on $(\frac{2m\pi}{n},\frac{(2m+1)\pi}{n})$ correspond to roots of the trinomial. Finally we can take the partition $[\frac{2m\pi}{n},\frac{(2m+2)\pi}{n}]$, $2m+2< n$.
\end{proof}

\begin{rem}\label{sec:Schinzel}
A. Schinzel (personal communication, January 14, 2014) suggested the following question to be explored: "does Lemma \ref{sec:RootPhi} follow from Erd{\H o}s-Tur\'{a}n estimate for the number of zeros of a given polynomial lying in a given angle". To show that the answer is affirmative let us, at first, remind the deep result of Erd{\H o}s and Tur\'{a}n.

\begin{thm}\label{sec:ErdTur}(Erd{\H o}s-Tur\'{a}n \cite{ET}) If the roots of the polynomial
\[P(z)=a_0+a_1z+\ldots+a_nz^n\]
are denoted by $z_{\nu}=r_{\nu}e^{i\varphi_{\nu}}$, $\nu=1,2,\ldots,n$ then for every $0\le\alpha<\beta\le 2\pi$ we have
\[\left|\sum_{\nu:\alpha\le\varphi_{\nu}\le\beta}1-\frac{\beta-\alpha}{2\pi}n\right|<16\sqrt{n\ln\frac{|a_0|+\cdots+|a_n|}{\sqrt{|a_0a_n|}}}=16\sqrt{n\ln R}\]
\end{thm}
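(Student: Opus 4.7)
The argument is classical (due to Erd\H{o}s and Tur\'an), so rather than rederive it in full I would sketch the strategy, which rests on Fourier analysis on the unit circle together with Jensen's formula.

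First, I would reframe the estimate as a discrepancy bound for the signed measure
\[ \mu = \sum_{\nu=1}^n \delta_{\varphi_\nu} - \frac{n}{2\pi}\,d\theta \]
on $[0,2\pi]$. The quantity on the left of the stated inequality is precisely $|\mu([\alpha,\beta])|$, so the task reduces to bounding $\sup_I |\mu(I)|$ over arcs $I \subset [0,2\pi]$.

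Second, I would invoke a general Erd\H{o}s--Tur\'an-type inequality: for every finite signed Borel measure $\nu$ on $[0,2\pi]$ of total mass zero and every positive integer $K$,
\[ \sup_I |\nu(I)| \le \frac{C\,|\nu|([0,2\pi])}{K+1} + C'\sum_{k=1}^K \frac{|\widehat{\nu}(k)|}{k}. \]
This is obtained by approximating the indicator of an arc by a trigonometric polynomial of degree $K$ (a Fej\'er-type kernel works), and it converts Fourier control into pointwise discrepancy control.

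Third, I would bound the Fourier coefficients $\widehat{\mu}(k)=\sum_\nu e^{-ik\varphi_\nu}$ in terms of the coefficients of $P$. The bridge is Jensen's formula, which together with the elementary estimate $|P(e^{i\theta})| \le |a_0|+\cdots+|a_n|=L$ yields
\[ \sum_{|z_\nu|>1}\log|z_\nu| \le \log(L/|a_n|). \]
Applying the same reasoning to the reciprocal polynomial $z^n\overline{P(1/\bar z)}$ controls the zeros inside the disk, giving a matching bound in terms of $\log(L/|a_0|)$. Combining these with the identities expressing $e^{-ik\varphi_\nu}$ in terms of $z_\nu^{\pm k}$ (splitting the sum by whether $|z_\nu|\lessgtr 1$) and applying Cauchy--Schwarz produces $\sum_{k=1}^K |\widehat{\mu}(k)|/k \le C''\sqrt{K\,n\log R}$, where $R = (|a_0|+\cdots+|a_n|)/\sqrt{|a_0 a_n|}$ is the symmetric quantity appearing in the statement.

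Finally, I would optimise $K$ of order $\sqrt{n/\log R}$ to balance the two terms from the discrepancy inequality, producing a bound of the required form $\mathrm{const}\cdot \sqrt{n\log R}$. The main obstacle, and indeed the only substantive work, is the sharpening of the absolute constant through the Fej\'er-kernel approximation step in order to land at exactly $16$ rather than some larger numerical value; this careful bookkeeping is the content of the original paper \cite{ET}, and I would either reproduce it or simply cite the result (as the paper under consideration does).
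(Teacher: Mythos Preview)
The paper does not prove this statement at all: it is quoted verbatim as a classical theorem of Erd\H{o}s and Tur\'an, with a citation to \cite{ET}, and then immediately applied to the trinomial $z^n-az-1$. So there is nothing to compare on the paper's side beyond the bare citation---which you yourself anticipate in your final sentence.

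Your sketch is a reasonable outline of the standard approach (Fourier/discrepancy reduction plus Jensen-type control of the power sums), and it is honest about where the real work lies, namely the explicit constant. Since the paper treats the result as a black box, the appropriate ``proof'' here is simply to cite \cite{ET}; anything more is supplementary exposition rather than a comparison with the paper's own argument.
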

For the trinomial $P(z)=z^n-az-1$ we can see that $R=2+|a|$, so it does not depend on $n$. If we divide both sides of the inequality by $n$ we obtain
\[\left|\frac{1}{n}\sum_{\nu:\alpha\le\varphi_{\nu}\le\beta}1-\frac{\beta-\alpha}{2\pi}\right|<16\sqrt{\frac{\ln (2+|a|)}{n}}\to 0,\;\;n\to\infty.\]
Thus we can conclude that arguments of roots of the trinomial are uniformly distributed on $[0,2\pi]$ as $n$ tends to infinity.
\end{rem}

\section{Proofs of the theorems}

Using all these lemmas, from the previous section, we are able now to prove Theorem \ref{cha:nuApprox}:
\begin{proof} If we set $\rho=1$ in the equation \eqref{Trig1}
\[0=a(a+2\cos\varphi)\]
we can solve it in $ \varphi$: $ \varphi=\arccos(-\frac{a}{2})$. It is proved in Lemma \ref{sec:ImplFunc} that the function $\rho=\rho(\varphi)$ is an decreasing function on $[0,\pi)$, so the modulus of a root $\alpha_j=\rho_j(\cos\varphi_j+i\sin\varphi_j)$ is greater than 1 if $\varphi_j\in[0,\arccos(-\frac{a}{2}))$. It follows from Lemma \ref{sec:RootPhi} that $\alpha_j$ are approximately uniformly distributed on $[0,\pi)$. We finally conclude that the rate $\frac{\nu_{n,a}}{n}$ between the number $\nu_{n,a}$ of roots of the trinomial $x^n-ax-1$, $a\in (0,2]$, which are greater than 1 in modulus, and degree $n$, tends to the rate between the length of the interval $[0,\arccos(-\frac{a}{2}))$ and the interval $[0,\pi)$ that is $\frac{1}{\pi}\arccos(-\frac{a}{2})$, $n\to \infty$.
\end{proof}

Theorem \ref{cha:Product} remains to be proved:
\begin{proof}
\begin{eqnarray*}
\prod_{|\alpha_i|>1}|\alpha_i|&=&\exp(\sum_{|\alpha_i|>1}\ln|\alpha_i|)   \\
  &=&\exp(\sum_{\rho_i>1}\ln\rho_i) \;\;\;  (\textrm{using} \;\eqref{Trig1})\\
  &=&\exp(\sum_{\rho_i>1}\frac{1}{2n}\ln(a^2{\rho_i}^2+2a\rho_i\cos\varphi_i+1))\;\;\;(*)\\
  &=&\exp(\sum_{0\le\varphi_i<\arccos(-\frac{a}{2})}\frac{2}{2n}\ln(a^2{\rho_i}^2+2a\rho_i\cos\varphi_i+1))\\
  &=&\exp(\frac{1}{2\pi}\sum_{0\le\varphi_i<\arccos(-\frac{a}{2})}\frac{2\pi}{n}\ln(a^2{\rho_i}^2+2a\rho_i\cos\varphi_i+1))\\
 \end{eqnarray*}
We use in (*) Theorem \ref{cha:nuApprox} and the well known theorem: if $\alpha$ is a root of a polynomial with real coefficients then $\bar{\alpha}$ is also its root. Since \[\ln(a^2+2a\cos\varphi_i+1)\le\ln(a^2{\rho_i}^2+2a\rho_i\cos\varphi_i+1)\le\ln[{\beta}^2(a^2+2a\cos\varphi_i+1)]\]
where $\beta$ is determined as in lemma \ref{sec:RealZero}, we conclude that
\[\exp(\frac{1}{2\pi}\sum_{0\le\varphi_i<\arccos(-\frac{a}{2})}\frac{2\pi}{n}\ln(a^2+2a\cos\varphi_i+1))
\le\prod_{|\alpha_i|>1}|\alpha_i|.\]
On the other hand
\begin{eqnarray*}
\prod_{|\alpha_i|>1}|\alpha_i|&\le&\exp(\frac{1}{2\pi}\sum_{0\le\varphi_i<\arccos(-\frac{a}{2})}\frac{2\pi}{n}(\ln\beta^2+\ln(a^2+2a\cos\varphi_i+1)))\\
    &\le&\exp(\ln\beta^2+\frac{1}{2\pi}\sum_{0\le\varphi_i<\arccos(-\frac{a}{2})}\frac{2\pi}{n}\ln(a^2+2a\cos\varphi_i+1))\\
    &=&\beta^2\exp(\frac{1}{2\pi}\sum_{0\le\varphi_i<\arccos(-\frac{a}{2})}\frac{2\pi}{n}\ln(a^2+2a\cos\varphi_i+1))\\
 \end{eqnarray*}
Since the function $\ln(a^2+2a\cos t+1)$ is Riemann-integrable the sum
\[\sum_{0\le\varphi_i<\arccos(-\frac{a}{2})}\frac{2\pi}{n}\ln(a^2+2a\cos\varphi_i+1)\]
tends to \[\int_0^{\arccos(-\frac{a}{2})}\ln(a^2+2a\cos t+1)dt ,\]
when $n\to\infty$, as an integral sum with the partition defined in lemma \ref{sec:RootPhi}.
If we bring to mind that $\beta\to 1$, $n\to\infty$ the theorem is proved.
\end{proof}

\begin{rem}\label{sec:Dubickas}
A. Dubickas (personal communication, January 14, 2014) noted that Theorem \ref{cha:Product} follows immediately from the fact that the (logarithmic) Mahler
measure $m(x^n-ax-1)$ tends to $m(y-ax+1)$ as $n$ tends to
infinity
where the logarithmic Mahler measures $m(p)=\ln M(p)$.
(The following limit formula is valid for a two variable polynomial $p$ \cite{Boy3}: 
$m(p(x,x^n))\to m(p(x,y))$, as n tends to $\infty$.) Using Maillot formula \cite{Ma} for $m(ax+by+c)$ which is generalized in \cite{Van} one could prove that
\begin{equation}\label{Maillot}
m(y-ax+1)=\frac{1}{\pi}(\textrm{Im}(\textrm{L}i_2(z))+\arg z\ln a)
\end{equation}
where $0<a\le 2$, $z=e^{2\arcsin(a/2)i}$ and dilogarithm $\textrm{L}i_2(z)=\sum_{k=1}^{\infty}\frac{z^k}{k^2}$.
\end{rem}

\section{Conjecture (CP)}

At last, we have to confirm Conjecture (CP) under the assumption that the conjecture of Lind-Boyd is valid. As we have seen, there are three cases in the conjecture of Lind-Boyd. In the first case Conjecture (CP) follows immediately as a corollary from Theorem \ref{cha:nuApprox}: if we take $a=1$ the rate is $\frac{2}{3}$. In the third case, if $n$ is even, we can use the substitution $x^2=t$ and refer to the first case. Likewise, in the second case, if $4|n$, we can use the substitution $x^4=t$ and refer to the first case again. It could be shown, using Theorem \ref{cha:nuApprox}, that in the other two cases Conjecture (CP) is also valid.

If we represent roots of trinomial $x^n-ax^2-1$ ($x^n-ax^4-1$) in the complex plane we can notice that all of them lay on a heart shape curve with two (four) cusps. 
\begin{figure}[!htbp]
\caption{Roots of polynomials $x^{24}-x^2-1$, $x^{24}-1$}
\begin{center}
\includegraphics {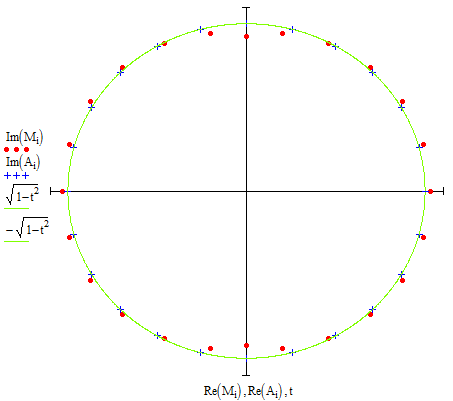}
\end{center}
\end{figure}
\begin{figure}[!htbp]
\caption{Roots of polynomials $x^{24}-x^4-1$, $x^{24}-1$}
\begin{center}
\includegraphics {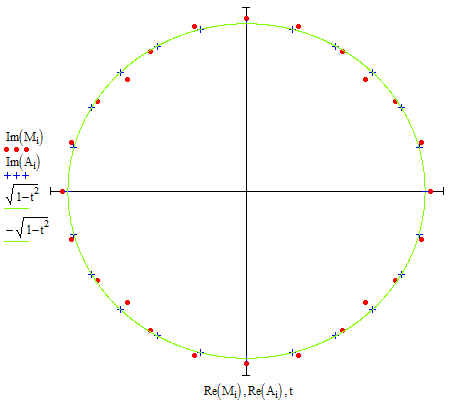}
\end{center}
\end{figure}
Using the same methods as in Chapter 2 we can show that 
\begin{enumerate}
                \item moduli of roots decrease while its arguments increase on $[0,\pi/2)$ ($[0,\pi/4)$),
                \item arguments of roots are uniformly distributed on $[0,2\pi)$,
                \item modulus of a root tends to 1 as $n$ tends to infinity,
                \item a root $\alpha= e^{i\varphi}$ lay on the curve $C_2:\rho^{2n}=a^2\rho^4+2a\rho^2\cos2\varphi+1$ ($C_4:\rho^{2n}=a^2\rho^8+2a\rho^4\cos4\varphi+1$),
                \item the subset of $[0,\pi/2]$ ($[0,\pi/4]$) on which $C_2$ ($C_4$) is out of the unit circle tends to $[0,\arccos\frac{-a}{2}\frac{\pi}{2})$ ($[0,\arccos\frac{-a}{2}\frac{\pi}{4})$).
\end{enumerate}

\begin{rem}\label{sec:Hare}
K. Hare (personal communication, January 23, 2014) asked:
"The Mahler measure result still holds for $a = 0$, although the $\nu$ result does not, as all
roots are on the unit circle.  What happens if $a < 0$, or $a >2$, or
$a$ complex with $0 < |a| \le 2$ or $|a| >2$ ?". 

If $|a|>2$ then there is exactly one root $\alpha_1$ inside the unit circle, of the trinomial $T_{n,a}=x^n-ax-1$ which should be close to $-1/a$. Thus, by the Vieta's formula, the Mahler measure of $T_{n,a}$ tends to $|a|$, $n\to \infty$. The same result we can get using the Maillot formula \cite{Van}. 

If we assume that $a=|a|e^{i\theta}$, $0<|a|\le 2$, $\theta=\frac{p}{q}2\pi$, $p,q\in \textbf{N}$ then we can use the substitution $x=e^{-i\theta} t$ which preserve the Mahler measure. We know that the limit of the Mahler measure $\textrm{M}(T_{n,a})$ exist as $n\to \infty$ \cite{Boy3}. Thus we can determine it using the subsequence $n=kq$. We obtain trinomial $t^{np}-|a|t-1$. Finally, we can use Theorem \ref{cha:Product} to calculate the limit.
\end{rem}

\section{The computation}

\begin{table}[!htbp]
\caption{$\frac{\nu_{n,a}}{n}$, $\textrm{M}(T_{n,a})$, $\exp(\frac{1}{2\pi}\int_0^{\arccos(-\frac{a}{2})}\ln(1+a^2+2a\cos t)dt )$} 
\centering 
\begin{tabular}{c c c c c c c} 
\hline\hline 
$a$ &	$\frac{\nu_{100,a}}{100}$ &	$\frac{\nu_{150,a}}{150}$ & $\frac{1}{\pi}\arccos\frac{-a}{2}$ & $\textrm{M}(T_{100,a})$ & $\textrm{M}(T_{150,a})$ & $\exp(\frac{1}{2\pi}\int\ldots$\\ [0.5ex] 
\hline 
0.1 & 0.51 & 0.51333  & 0.51592   & 1.0323479 & 1.0323491  & 1.0323476 \\
0.2 & 0.53 & 0.52667  & 0.53188   & 1.0657805 & 1.0657672  & 1.0657699 \\
0.3 & 0.55 & 0.55333  & 0.54793   & 1.1003457 & 1.1003221  & 1.1003332 \\
0.4 & 0.57 & 0.56667  & 0.56409   & 1.1360971 & 1.1361120  & 1.1361098 \\
0.5 & 0.57 & 0.58000  & 0.58043   & 1.1731391 & 1.1731919  & 1.1731790 \\
0.6 & 0.59 & 0.59333  & 0.59699   & 1.2116363 & 1.2116381  & 1.2116281 \\
0.7 & 0.61 & 0.60667  & 0.61382   & 1.2515943 & 1.2515324  & 1.2515544 \\
0.8 & 0.63 & 0.63333  & 0.63099   & 1.2931127 & 1.2930638  & 1.2930665 \\
0.9 & 0.65 & 0.64667  & 0.64858   & 1.3363117 & 1.3363123  & 1.3362872 \\
1.0 & 0.67 & 0.66000  & 0.66667   & 1.3813362 & 1.3813469  & 1.3813564 \\
1.1 & 0.69 & 0.68667  & 0.68537   & 1.4283607 & 1.4284371  & 1.4284355 \\
1.2 & 0.71 & 0.70000  & 0.70483   & 1.4775944 & 1.4777358  & 1.4777126 \\
1.3 & 0.73 & 0.72667  & 0.72523   & 1.5292849 & 1.5294002  & 1.5294116 \\
1.4 & 0.75 & 0.74000  & 0.74682   & 1.5837217 & 1.5838093  & 1.5838036 \\
1.5 & 0.77 & 0.76667  & 0.76995   & 1.6412364 & 1.6412643  & 1.6412260 \\
1.6 & 0.79 & 0.79333  & 0.79517   & 1.7022009 & 1.7021417  & 1.7021144 \\
1.7 & 0.81 & 0.82000  & 0.82340   & 1.7670223 & 1.7670956  & 1.7670601 \\
1.8 & 0.85 & 0.84667  & 0.85643   & 1.8370110 & 1.8369104  & 1.8369342 \\
1.9 & 0.89 & 0.90000  & 0.89892   & 1.9132863 & 1.9131780  & 1.9132259 \\
2.0 & 0.99 & 0.99333  & 1.00000   & 2.0000000 & 2.0000000  & 2.0000000 \\
\hline 
\end{tabular}
\label{table:nu} 
\end{table}

Finally we present in Table 1 the computing of the rate $\frac{\nu_{n,a}}{n}$ and the Mahler measure of the trinomial $T_{n,a}=x^n-ax-1$ for $n=100,150$, $a=0.1,0.2,\ldots,2.0$ as well as its limits. Generally, in the case $n=150$ results are much closer to the limit than in the case $n=100$. Nevertheless there are results which are much more distant from the limit in the case $n=150$, for example $a=0.1$ and $a=1.5$. If $a=1$ we get the smallest known limit point of nonreciprocal measures
$\lim_{n\to\infty} M(z^n - z - 1) = 1.38135\ldots$ (Boyd \cite{Boy2}). The last column in the table could be obtained in another way: using the exponential function of the right side of \eqref{Maillot}.

\textbf{Acknowledgements.} I am grateful to David Boyd, Yann Bugeaud, Arturas Dubickas, Kevin Hare, Aleksandar Ivi\'c, Gradimir Milovanovi\'c and Andrzej Schinzel for their comments, suggestions and corrections.


\end{document}